\documentclass[12pt]{article}
\usepackage[a4paper,width=6.8in,height=8.8in]{geometry}
\usepackage{amsmath,amssymb}
\usepackage{amsmath,amssymb,mathtools,amsthm,
	textcomp,mathscinet}
\usepackage{amsfonts,graphicx,enumerate,subcaption,placeins}
\interdisplaylinepenalty=0
\usepackage[hidelinks]{hyperref}
\usepackage{setspace}
\usepackage[utf8]{inputenc}
\usepackage[english]{babel}
\usepackage{booktabs,multirow}
\usepackage{blkarray}
\usepackage{lscape}
\usepackage[T1]{fontenc}
\usepackage{authblk,color}
\usepackage[english]{babel}
\usepackage{amsfonts, amstext, amsthm, booktabs, dcolumn}
\usepackage{graphicx}
\usepackage{float}
\usepackage{caption}
\usepackage{centernot}
\usepackage{times}
\usepackage[nottoc]{tocbibind}
\usepackage[mathscr]{eucal}

\definecolor{gr}{rgb}{0.7, 0.0, 0.15}

\newtheorem{theorem}{\bf Theorem}[section]

\newtheorem{corollary}{\bf Corollary}[section]
\newtheorem{remark}{\bf Remark}[section]

\newtheorem{lemma}{\bf Lemma}[section]

\newcommand\floor[1]{\left\lfloor#1\right\rfloor}
\newcommand\ceil[1]{\left\lceil#1\right\rceil}

\numberwithin{equation}{section}
\setstretch{1.18}

\begin{document}
\title{Binomial Approximation to Locally Dependent CDO}
\author[*]{{\Large Amit N. Kumar}}
\author[$\dagger$]{{\Large P. Vellaisamy}}
\affil[*]{Department of Mathematical Sciences, Indian Institute of Technology (BHU),}
\affil[ ]{Varanasi (Uttar Pradesh) -- 221005, India.}
\affil[$\dagger$]{ Department of Mathematics, Indian Institute of Technology Bombay,}
\affil[ ]{Powai, Mumbai -- 400076, India.}
\affil[*]{Email: amit.kumar2703@gmail.com}
\affil[ $\dagger$]{Email: pv@math.iitb.ac.in}
\date{}
\maketitle

\begin{abstract}
\noindent
In this paper, we develop Stein's method for binomial approximation using the stop-loss metric that allows one to obtain a bound on the error term between the expectation of call functions. We obtain the results for a locally dependent collateralized debt obligation (CDO), under certain conditions on moments. The results are also exemplified for an independent CDO. Finally, it is shown that our bounds are sharper than the existing bounds.
\end{abstract}

\noindent
\begin{keywords}
Binomial distribution; error bounds; Stein's method; CDO.
\end{keywords}\\
{\bf MSC 2020 Subject Classifications:} Primary: 62E17, 62E20; Secondary: 60F05, 60E05.

\vspace{-0.41cm}
\section{Introduction}\label{11:sec1}
The collateralized debt obligation (CDO) is a financial tool of structured asset-backed security. It is used to repackage assets into a product and sold to investors in the secondary market. These packages are in terms of debt such as auto loans, credit card debt, mortgages, and corporate debt. The assets are sliced into tranches, a set of repayment ties with different payment priorities and interest rates. The elementary tranches are senior (low risk and low return), mezzanine and equity (high risk and high return).  Investors can choose a tranche of their interest to invest. For more details, see  Jiao and Karoui \cite{JK}, Jiao {\em et al.} \cite{JKK}, Hull and White \cite{HW}, Kumar \cite{k2021, kumar2022}, Neammanee and Yonghint \cite{NY}, Yonghint {\em et al.} \cite{YNC2}, and the reference therein.\\
Condider the tranche pricing with $n$ portfolios, where each portfolio has a constant recovery rate $R>0$. Then, the percentage loss at time $T$ can be defined as (see Neammanee and Yonghint \cite[p. 2]{NY})
\vspace{-0.28cm}
\begin{align}
L(T)=\frac{(1-R)}{n}\sum_{i=1}^{n}{\bf 1}_{\{\tau_i\le T\}},\label{11:111}
\end{align} 
where $\tau_i$ is the default time of the $i$-th portfolio and ${\bf 1}_A$ denotes the indicator function of the set $A$. For a detachment point or an attachment point, say $z^*$, the main problem of CDO pricing is to evaluate the value of the mean of percentage total loss for each tranche given by (see Yonghint {\em et al.} \cite[p. 2]{YNC2})
\vspace{-0.28cm}
\begin{align*}
\mathbb{E}[(L(T)-z^*)^+]=\frac{(1-R)}{n}\mathbb{E}\left[\left(W_n-z\right)^+\right],
\end{align*} 
where $h(x)=x^+=\max\{x,0\}$ is the call function, $W_n=\sum_{i=1}^{n}{\bf 1}_{\{\tau_i\le T\}}$ and $z=({nz^*}/{(1-R)})>0$. For additional  details, see, for example, Kumar \cite{k2021, kumar2022}, Neammanee and Yonghint \cite{NY}, Yonghint {\em et al.} \cite{YNC2}.\\
In general, the mean of percentage total loss is difficult to compute when ${\bf 1}_{\{\tau_i\le T\}}$, $i=1,2,\ldots,n$, are locally dependent random variables. Therefore, it is of interest to find a suitable approximating distribution which is close to $W_n$. The proximity between $\mathbb{E}[(W_n-z)^+]$ and $\mathbb{E}[(\text{P}_\lambda-z)^+]$ is studied by Yonghint {\em et al.} \cite{YNC2}, where $\text{P}_\lambda$ denotes the Poisson random variable with parameter $\lambda$. Observe that $W_n$ is the sum of dependent Bernoulli random variables and so the binomial distribution also could a good choice  to  approximate $W_n$. See Vellaisamy and Punnen \cite{VP2001}, where it is shown that the binomial distribution may arise as the distribution of the sum of a certain dependent Bernoulli random variables. Therefore, we choose the binomial distribution as the target distribution.\\
Throughout this paper, let $\text{B}_{\alpha,p}$ follow the binomial distribution with probability mass function 
\begin{align}
\mathbb{P}(\text{B}_{\alpha,p}=k)=\binom{\alpha}{k}p^kq^{\alpha-k},\quad \text{for }k=0,1,\ldots,\alpha,\label{11:in1}
\end{align}
where $\alpha$ is a positive integer and $0<p=1-q<1$. We derive the result using the stop-loss metric defined by 
\begin{align}
d_{sl}(X,Y)= \sup_{z\in \mathbb{R}}\left|\mathbb{E}[(X-z)^+]-\mathbb{E}[(Y-z)^+]\right|,\label{11:eq15}
\end{align}
where $\mathbb{R}$ denotes the set of  real numbers. For more details, see Bautsikas and Veggelatou \cite{BV2002}.\\
The paper is organized as follows. In Section \ref{11:sec2}, we develop Stein's method for binomial distribution, under the stop-loss metric. In particular, we discuss uniform and non-uniform bounds for the solution of the Stein equation. In Section \ref{11:sec3}, we derive the error in approximation of a locally dependent CDO to a suitable binomial distribution. As a special case, we demonstrate the results for the independent CDO.  Also, we give some numerical comparisons between our results and the existing results. It is shown that our bounds improve significantly over the existing bounds.

\onehalfspacing
\section{Stein's Method}\label{11:sec2}
Stein \cite{stein1972} proposed an elegant method to find the error in approximating
the sums of real-valued random variables to a normal distribution. Then, Chen \cite{chen1975} adapted the technique for approximating the sums of discrete random variables to a 
suitable Poisson distribution. Later, numerous authors adapted or used 
 Stein's method for several distributions and under various distance metrics. To mention a few relevant ones,  Stein's method for negative binomial distribution has been studied by Brown and Phillips \cite{BP} under the total variation distance and  for the Poisson distribution has been studied by Neammanee and Yonghint \cite{NY}, under the stop-loss metric. For recent developments, see Barbour {\em et al.} \cite{BHJ}, Brown and Xia \cite{BX2001}, Eichelsbacher and Reinert \cite{ER} and Kumar \cite{kumar2022}.

\vspace*{.3cm}
\noindent In this section, we develop the Stein's method for the binomial distribution, under the stop-loss metric defined in \eqref{11:eq15}. Our work is mainly focused on finding the upper bound for $|\Delta g_z(k)|=|\ g_z(k+1)-g_z(k)|$, where $g_z(k)$ is the solution of the Stein equation given by
\begin{align}
\mathscr{A}g_z(k)=(k-z)^+-\mathbb{E}(\text{B}_{\alpha,p}-z)^+.\label{11:eq40}
\end{align}
Here, $\mathscr{A}$ denotes the Stein operator of $\text{B}_{\alpha,p}$ given by
\begin{align}
\mathscr{A}g_z(k)=\frac{(\alpha-k)p}{q}g_z(k+1)-kg_z(k), \quad \text{for }k=0,1,\ldots,\alpha.\label{11:in3}
\end{align}
Note that $\mathbb{E}(\mathscr{A} g_z(\text{B}_{\alpha,p}))=0$. Using \eqref{11:in3}, the Stein equation \eqref{11:eq40} leads to 
\begin{align*}
\frac{(\alpha-k)p}{q}g_z(k+1)-kg_z(k)=(k-z)^+-\mathbb{E}(\text{B}_{\alpha,p}-z)^+.
\end{align*}
From Section 2 of Kumar {\em et al.} \cite[p.~4]{KUV} with appropriate changes, it can  easily be verified that the solution of the above equation is
\begin{align}
g_z(k)=\left\{
\begin{array}{ll}
0,& \text{if $k=0$};\\
\displaystyle{-\sum_{j=k}^{\alpha}\frac{(\alpha-k)!}{(\alpha-j)!}\frac{(k-1)!}{j!}\left(\frac{p}{q}\right)^{j-k}[(j-z)^+-\mathbb{E}(\text{B}_{\alpha,p}-z)^+]}, & \text{if $1\le k \le \alpha$.}
\end{array}\right.\label{11:ss}
\end{align}
The bounds for $|\Delta g_z(k)|$ may be essentially uniform for all $k$ and $z$. However, the bound depends on $k$ may be useful in practice to get a sharper bound for the stop-loss metric. So, we obtain the non-uniform upper bound for $|\Delta g_z(k)|$.\\
The following lemma gives the non-uniform upper bound for $|\Delta g_z(k)|$, for all $z\ge 0$.

\begin{lemma}\label{11:smle1}
For $z\ge 0$, the following inequality holds:
\begin{align*}
|\Delta g_z (k)|\le \left\{
\begin{array}{ll}
2q^{1-\alpha}-q, & \text{for}~ k=0;\\
2q^{k-\alpha}, & \text{for}~ 1\le k \le \alpha.
\end{array}\right.
\end{align*}
\end{lemma}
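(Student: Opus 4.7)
\noindent The plan is to work from the closed-form solution~\eqref{11:ss}. Abbreviating $b_{\alpha,p}(j):=\mathbb{P}(\text{B}_{\alpha,p}=j)=\binom{\alpha}{j}p^j q^{\alpha-j}$ and $\bar h_z(j):=(j-z)^+-\mathbb{E}(\text{B}_{\alpha,p}-z)^+$, a short calculation shows that the coefficient in~\eqref{11:ss} equals $\frac{1}{k}\cdot b_{\alpha,p}(j)/b_{\alpha,p}(k)$, giving the two equivalent representations
\[
g_z(k)=-\frac{1}{k\,b_{\alpha,p}(k)}\sum_{j=k}^{\alpha}b_{\alpha,p}(j)\,\bar h_z(j)=\frac{1}{k\,b_{\alpha,p}(k)}\sum_{j=0}^{k-1}b_{\alpha,p}(j)\,\bar h_z(j),\quad 1\le k\le\alpha,
\]
the equivalence following from $\mathbb{E}\bar h_z(\text{B}_{\alpha,p})=0$. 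For $k=0$ one has $g_z(0)=0$, so $\Delta g_z(0)=g_z(1)$. Setting $k=1$ and isolating the $j=0$ term (with $\bar h_z(0)=-\mathbb{E}h_z$ since $h_z(0)=0$ for $z\ge 0$) yields $g_z(1)=b_{\alpha,p}(0)\bar h_z(0)/b_{\alpha,p}(1)=-q\,\mathbb{E}h_z/(\alpha p)$. Since $\mathbb{E}h_z\le\mathbb{E}\text{B}_{\alpha,p}=\alpha p$, we get $|g_z(1)|\le q$; and $\alpha\ge 1$ forces $q^{1-\alpha}\ge 1\ge q$, so $|g_z(1)|\le q\le 2q^{1-\alpha}-q$ as claimed.

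\noindent For $1\le k\le\alpha$, the intended route is the triangle inequality $|\Delta g_z(k)|\le|g_z(k+1)|+|g_z(k)|$, using the convention $g_z(\alpha+1)=0$ (natural because~\eqref{11:in3} at $k=\alpha$ does not involve $g_z(\alpha+1)$). A companion bound $|g_z(k)|\le q^{k-\alpha}$ for all $z\ge 0$ and $1\le k\le\alpha$ would then give $|\Delta g_z(k)|\le(1+q)q^{k-\alpha}\le 2q^{k-\alpha}$. The base case $k=\alpha$ is immediate: \eqref{11:in3} at $k=\alpha$ reads $\alpha g_z(\alpha)=-\bar h_z(\alpha)$, and for $z\ge 0$ one has $0\le\mathbb{E}h_z\le(\alpha-z)^+\le\alpha$, hence $|\bar h_z(\alpha)|\le\alpha$ and $|g_z(\alpha)|\le 1=q^{\alpha-\alpha}$.

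\noindent The main obstacle is establishing $|g_z(k)|\le q^{k-\alpha}$ for $1\le k<\alpha$. A plain descending induction via $k g_z(k)=\frac{(\alpha-k)p}{q}g_z(k+1)-\bar h_z(k)$ together with the crude estimate $|\bar h_z(k)|\le\alpha$ is not sharp enough: it closes only when $k\ge(\alpha-k)p$, which can fail for small $k$. I would instead work with the summation representation and split the range of $j$ at $\lceil z\rceil$. On $j\le z$ we have $|\bar h_z(j)|=\mathbb{E}h_z\le\alpha p$, so the factor of $p$ combines with the ratios $b_{\alpha,p}(j)/b_{\alpha,p}(k)$ to produce terms of order $q^{k-\alpha}$; on $j>z$, monotonicity of $h_z$ together with $\mathbb{E}\bar h_z(\text{B}_{\alpha,p})=0$ allows the dominant portion to be absorbed against the $j\le z$ block. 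Tracking the constants through this case analysis (paralleling Section~2 of Kumar \emph{et al.}~\cite{KUV}) should yield the desired estimate and complete the proof.
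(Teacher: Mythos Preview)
Your treatment of $k=0$ is correct and in fact sharper than the paper's: by switching to the lower-tail representation $g_z(k)=\frac{1}{k\,b_{\alpha,p}(k)}\sum_{j<k}b_{\alpha,p}(j)\bar h_z(j)$ you obtain $|g_z(1)|\le q$ directly, whereas the paper bounds the full sum in~\eqref{11:ss} term by term and arrives only at $2q^{1-\alpha}-q$. The case $k=\alpha$ is also fine.

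The genuine gap is in the range $1\le k<\alpha$. You reduce the lemma to the companion estimate $|g_z(k)|\le q^{k-\alpha}$, but this estimate is never established: the outline (``split at $\lceil z\rceil$, absorb the $j>z$ block against the $j\le z$ block by monotonicity and $\mathbb{E}\bar h_z=0$'') is a description of intent, not an argument, and you yourself note that the obvious descending induction breaks down for small $k$. Any cancellation-based route faces the same obstacle, since $|\bar h_z(k)|$ can be of order $k$ while the target bound is of order $q^{k-\alpha}$; without the explicit constants tracked, there is no proof.

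The paper closes this gap by a different decomposition that avoids cancellation entirely. Write $-g_z(k)=h_1(k)-h_2(k)$, where $h_1(k)$ collects the $(j-z)^+$ contributions and $h_2(k)$ collects the $\mathbb{E}(\text{B}_{\alpha,p}-z)^+$ contributions in~\eqref{11:ss}. Both quantities are strictly positive, and each is shown to satisfy $0<h_i(k)\le q^{k-\alpha}$ via elementary binomial-sum identities (for $h_1$, bound $(j-z)^+\le j$ and collapse to $\sum_{j=0}^{\alpha-k}\binom{\alpha-k}{j}(p/q)^j$; for $h_2$, use $\mathbb{E}(\text{B}_{\alpha,p}-z)^+\le\alpha p$ and a similar identity). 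Positivity then gives $|g_z(k)|=|h_1(k)-h_2(k)|\le\max(h_1(k),h_2(k))\le q^{k-\alpha}$, which is exactly the bound your triangle-inequality route requires. So your overall strategy is viable, but the missing step is precisely the paper's computation of the $h_1$ and $h_2$ bounds.
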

\begin{proof}
It can be easily verified that $\mathbb{E}[(\text{B}_{\alpha,p}-z)^+]\le \alpha p$. For $k=0$, we have
\begin{align*}
|\Delta g_z(0)|=|g_z(1)|&\le\sum_{j=1}^{\alpha}\frac{(\alpha-1)!}{(\alpha-j)!j!}\left(\frac{p}{q}\right)^{j-1}[(j-z)^++\mathbb{E}(B_{\alpha,p}-z)^+]\\
&\le \sum_{j=1}^{\alpha}\binom{\alpha-1}{j-1}\left(\frac{p}{q}\right)^{j-1}+p\sum_{j=1}^{\alpha}\binom{\alpha}{j}\left(\frac{p}{q}\right)^{j-1}\\
&=2q^{1-\alpha}-q .
\end{align*}
For $1\le k\le \alpha-1$, let 
\begin{align*}
h_1(k)=\sum_{j=k}^{\alpha}\frac{(\alpha-k)!}{(\alpha-j)!}\frac{(k-1)!}{j!}\left(\frac{p}{q}\right)^{j-k}(j-z)^+
\end{align*}
and
\begin{align*}
h_2(k)=\sum_{j=k}^{\alpha}\frac{(\alpha-k)!}{(\alpha-j)!}\frac{(k-1)!}{j!}\left(\frac{p}{q}\right)^{j-k}\mathbb{E}(\text{B}_{\alpha,p}-z)^+.
\end{align*}
Then,
\begin{align}
\Delta g_z(k)&=g_z(k+1)-g_z(k)=[h_1(k)-h_1(k+1)]-[h_2(k)-h_2(k+1)].\label{11:eq1}
\end{align}
First, consider
\begin{align}
0<h_1(k)&=\sum_{j=k}^{\alpha}\frac{(\alpha-k)!}{(\alpha-j)!}\frac{(k-1)!}{j!}\left(\frac{p}{q}\right)^{j-k}(j-z)^+\nonumber \\
&\le \sum_{j=k}^{\alpha}\frac{(\alpha-k)!}{(\alpha-j)!}\frac{(k-1)!}{(j-1)!}\left(\frac{p}{q}\right)^{j-k}\nonumber\\
&=1+\sum_{j=k+1}^{\alpha}\frac{(\alpha-k)!}{(\alpha-j)!}\frac{1}{k(k+1)\ldots (j-1)}\left(\frac{p}{q}\right)^{j-k} \nonumber\\
&\le 1+\sum_{j=k+1}^{\alpha}\binom{\alpha-k}{j-k}\left(\frac{p}{q}\right)^{j-k}\nonumber\\
&=1+\sum_{j=1}^{\alpha-k}\binom{\alpha-k}{j}\left(\frac{p}{q}\right)^{j}\nonumber\\
&=q^{k-\alpha}.\label{11:eq2}
\end{align}
Similarly, $0<h_1(k+1)\le q^{k+1-\alpha}\le q^{k-\alpha}$. Now, consider
\begin{align}
0<h_2(k)&=\sum_{j=k}^{\alpha}\frac{(\alpha-k)!}{(\alpha-j)!}\frac{(k-1)!}{j!}\left(\frac{p}{q}\right)^{j-k}\mathbb{E}(\text{B}_{\alpha,p}-z)^+\nonumber \\
&\le \frac{\alpha p}{k} \sum_{j=k}^{\alpha}\frac{(\alpha-k)!}{(\alpha-j)!}\frac{k!}{j!}\left(\frac{p}{q}\right)^{j-k}\nonumber \\
&\le\frac{\alpha p}{k}\left(1+\sum_{j=k+1}^{\alpha}\frac{(\alpha-k)!}{(\alpha-j)!}\frac{k!}{j!}\left(\frac{p}{q}\right)^{j-k}\right)\nonumber\\
&=\frac{\alpha p}{k}\left(1+\sum_{j=k+1}^{\alpha}\frac{(\alpha-k)!}{(\alpha-j)!}\frac{1}{(k+1)\ldots j}\left(\frac{p}{q}\right)^{j-k}\right)\nonumber\\
&\le\frac{\alpha p}{k}\left(1+\frac{1}{\alpha-k+1}\sum_{j=k+1}^{\alpha}\binom{\alpha-k+1}{j-k+1}\left(\frac{p}{q}\right)^{j-k}\right)\nonumber\\
&=\frac{\alpha p}{k}\left(1+\frac{1}{\alpha-k+1}\sum_{j=1}^{\alpha-k}\binom{\alpha-k+1}{j+1}\left(\frac{p}{q}\right)^{j}\right) \nonumber\\
&=\frac{\alpha p}{k}\left(1+\frac{q^{k-\alpha}-(\alpha-k)p-1}{(\alpha-k+1)p}\right)\nonumber\\
&\le\frac{\alpha p}{k}\left(1+\frac{q^{k-\alpha}-(\alpha-k+1)p}{(\alpha-k+1)p}\right)\nonumber\\
&=\frac{\alpha pq^{k-\alpha}}{(\alpha-k+1)kp}\nonumber\\
&\le q^{k-\alpha}.\label{11:eq3}
\end{align}
Similarly, it follows that $0<h_2(k+1)\le q^{k+1-\alpha}\le q^{k-\alpha}$. Therefore, from \eqref{11:eq1}, \eqref{11:eq2} and \eqref{11:eq3}, the result follows for $1\le k \le \alpha-1$.\\
Finally, for $k=\alpha$, it is easy to verify that
\begin{align*}
|\Delta g_z(\alpha)|=|g_z(\alpha)|=\frac{(\alpha-z)^+}{\alpha}+\frac{\mathbb{E}(B_{\alpha,p}-z)^+}{\alpha}\le 2.
\end{align*}
This proves the result.
\end{proof}

\begin{remark} \em
\begin{itemize}
\item[(i)] Observe that the uniform bound can be taken as
\begin{align}
|\Delta g_z(k)|\le 2q^{1-\alpha},\quad \text{for all $z\ge 0$ and $0\le k\le \alpha$.}\label{11:eq35}
\end{align}
\item[(ii)] From \eqref{11:ss}, \eqref{11:eq2} and \eqref{11:eq3}, it can be easily seen that
\begin{align}
|g_z(k)|\le 2q^{k-n},\quad \text{for all $z\ge 0$.}\label{11:ww7}
\end{align}
\end{itemize}
\end{remark}

\noindent
Also, the uniform bound for $|g_z(k)|$ is the same as given in \eqref{11:eq35}.

\noindent
Next, using the similar technique of the proof of Lemma \ref{11:smle1}, we obtain the   following lemma which gives the non-uniform upper bound for $|\Delta g_z(k)|$ for all $z> 1$.
\begin{lemma}\label{11:smle2}
For $z>1$, the following inequality holds:
\begin{align*}
|\Delta g_z (k)|\le\left\{
\begin{array}{ll}
\vspace{0.2cm}
\displaystyle{2\left(1+\frac{q^{k-\alpha}-1}{qz}\right)}, & \text{for $k\ge z$;}\\
\vspace{0.2cm}
\displaystyle{\frac{3\left(q^{k-\alpha}-1\right)}{pz} }, & \text{for $2\le k<z$;}\\
\displaystyle{\frac{2(\alpha-1)pq^{1-\alpha}}{z} }, & \text{for $1= k<z$.}
\end{array} \right.
\end{align*}
\end{lemma}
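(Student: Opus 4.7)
I would follow the same overall strategy as in the proof of Lemma~\ref{11:smle1}, starting from the decomposition
\[
\Delta g_z(k) = [h_1(k)-h_1(k+1)] - [h_2(k)-h_2(k+1)]
\]
in \eqref{11:eq1}, but exploit the hypothesis $z>1$ to sharpen the bounds on the individual sums by an extra factor of $1/z$. The three new ingredients are: (i) summands in $h_1$ with $j<z$ vanish, since $(j-z)^+=0$; (ii) for $j\ge z$, the algebraic identity $(j-z)/j!=1/(j-1)!-z/j!$ splits $h_1$ into two pieces, each of the form already estimated in \eqref{11:eq2} and \eqref{11:eq3}; and (iii) a Markov-type refinement of the form $\mathbb{E}(\text{B}_{\alpha,p}-z)^+\le C/z$, valid for $z>1$, obtained from $(\text{B}_{\alpha,p}-z)^+\le \text{B}_{\alpha,p}\mathbf{1}_{\{\text{B}_{\alpha,p}\ge z\}}\le \text{B}_{\alpha,p}^2/z$ together with the explicit second moment $\mathbb{E}(\text{B}_{\alpha,p}^2)=\alpha p(q+\alpha p)$.

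For $k\ge z$, every $j\ge k$ in the defining sums satisfies $j\ge z$, so $(j-z)^+=j-z$. Applying (ii), $h_1(k)$ decomposes as $A_k-zB_k$, where $A_k$ is bounded by $q^{k-\alpha}$ via the computation in \eqref{11:eq2} and $B_k$ is the same sum that is estimated in \eqref{11:eq3}. Forming the analogous decomposition of $h_1(k+1)$ and adding the $h_2$ contribution, which carries an extra $1/z$ by (iii), I collect terms to obtain the claimed bound $2\bigl(1+(q^{k-\alpha}-1)/(qz)\bigr)$.

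For $2\le k<z$, observation (i) truncates the sum defining $h_1(k)$ to $j\ge\lceil z\rceil$, and the surviving terms are controlled by the same geometric-series identity $\sum_{j=k+1}^\alpha \binom{\alpha-k}{j-k}(p/q)^{j-k}=q^{k-\alpha}-1$ used in \eqref{11:eq2}, which accounts for the factor $q^{k-\alpha}-1$ in the final bound. For $h_2$, combining the refined Markov bound from (iii) with the telescoping manipulation of \eqref{11:eq3} contributes the remaining $1/(pz)$ factor, yielding $3(q^{k-\alpha}-1)/(pz)$ after carefully tracking constants.

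The case $k=1<z$ is handled separately because the geometric-series step in \eqref{11:eq3} degenerates at $k=1$. Here I would use the Stein equation \eqref{11:in3} directly: setting $k=0$ and $k=1$ (both giving $(k-z)^+=0$ since $z>1$) yields closed-form expressions for $g_z(1)$ and $g_z(2)$ as explicit multiples of $\mathbb{E}(\text{B}_{\alpha,p}-z)^+$, and subtracting them together with an application of (iii) produces the $2(\alpha-1)pq^{1-\alpha}/z$ bound. The main obstacle throughout is ensuring that the $1/z$ factor survives every intermediate estimate without being absorbed into a coarser majorization, and selecting the form of the Markov-type bound on $\mathbb{E}(\text{B}_{\alpha,p}-z)^+$ that produces exactly the stated coefficients $2$, $3$ and $2(\alpha-1)pq^{1-\alpha}$ rather than weaker constants.
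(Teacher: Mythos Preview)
Your overall framework (the $h_1$--$h_2$ decomposition from \eqref{11:eq1} and the idea of extracting a $1/z$ factor) matches the paper, and your treatment of $h_1$ in the range $2\le k<z$ (truncating to $j\ge\lceil z\rceil$ and reducing to the geometric identity for $q^{k-\alpha}-1$) is essentially what the paper does. However, two of your three ``new ingredients'' do not deliver the stated constants, and the paper proceeds differently at exactly those points.

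First, your Markov-type bound $\mathbb{E}(\text{B}_{\alpha,p}-z)^+\le \mathbb{E}(\text{B}_{\alpha,p}^2)/z=\alpha p(q+\alpha p)/z$ is too crude: the $(\alpha p)^2$ term blows up and cannot be absorbed into the stated coefficients. The paper never uses the second moment. For $k\ge z$ it keeps the trivial bound $\mathbb{E}(\text{B}_{\alpha,p}-z)^+\le\alpha p$ and instead extracts the $1/z$ from the \emph{sum} $\sum_{j\ge k}\frac{(\alpha-k)!(k-1)!}{(\alpha-j)!\,j!}(p/q)^{j-k}$ itself, by isolating the $j=k$ term and factoring out $1/k\le 1/z$ from the tail (with a further case split $k\ge\alpha p$ versus $k\le\alpha p$). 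For $2\le k<z$ the paper uses the tailored bound $\mathbb{E}(\text{B}_{\alpha,p}-z)^+\le \alpha(\alpha-z)p/z$, whose $(\alpha-z)$ factor is precisely what cancels against the $(\alpha-k)(\alpha-k+1)$ appearing in the denominator of a new estimate on the sum; your second-moment bound has no such factor. For $k=1$ the paper uses yet another variant, $\mathbb{E}(\text{B}_{\alpha,p}-z)^+\le \alpha(\alpha-1)p^2/z$, derived from $(m-z)/m!\le 1/((m-2)!\,z)$ for $m\ge z>1$.

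Second, your algebraic split $(j-z)/j!=1/(j-1)!-z/j!$ for $h_1$ when $k\ge z$ writes $h_1(k)=A_k-zB_k$ with $A_k,B_k>0$, but an \emph{upper} bound on $h_1(k)$ of the form $1+(q^{k-\alpha}-1)/z$ cannot be read off from upper bounds on $A_k$ and $B_k$ separately (you would need a lower bound on $B_k$). The paper instead bounds $(j-z)^+\le j$, separates the first summand $(k-z)/k\le 1$, and uses $k\ge z$ on the remaining tail to produce the $1/z$ directly. Your idea of recovering $g_z(1)$ and $g_z(2)$ from the Stein equation at $k=0,1$ is a legitimate alternative for the case $k=1$, but it still requires the sharper expectation bound above (not $\mathbb{E}(\text{B}_{\alpha,p}^2)/z$) to land on the coefficient $2(\alpha-1)pq^{1-\alpha}$.
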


\begin{proof}
Let $k\ge z$. First, consider
\begin{align}
0<h_1(k)&=\sum_{j=k}^{\alpha}\frac{(\alpha-k)!}{(\alpha-j)!}\frac{(k-1)!}{j!}\left(\frac{p}{q}\right)^{j-k}(j-z)^+  \nonumber\\
&\le 1+\sum_{j=k+1}^{\alpha}\frac{(\alpha-k)!}{(\alpha-j)!}\frac{(k-1)!}{(j-1)!}\left(\frac{p}{q}\right)^{j-k}\nonumber\\
&=1+\frac{(\alpha-k)p}{kq}+\frac{1}{k}\sum_{j=k+2}^{\alpha}\frac{(\alpha-k)!}{(\alpha-j)!}\frac{1}{(k+1)\ldots (j-1)}\left(\frac{p}{q}\right)^{j-k}\nonumber\\
&\le 1+\frac{(\alpha-k)p}{kq}+\frac{1}{k}\sum_{j=2}^{\alpha-k}\binom{\alpha-k}{j}\left(\frac{p}{q}\right)^{j}\nonumber\\
&=1+\frac{(\alpha-k)p}{kq}+\frac{1}{k}\left(q^{k-\alpha}-\frac{(\alpha-k)p}{q}-1\right) \nonumber\\
&\le 1+\frac{q^{k-\alpha}-1}{z}.\label{11:f1}
\end{align} 
Next, for $k\ge z$ and $k\ge \alpha p$, we have
\begin{align}
0&<\sum_{j=k}^{\alpha}\frac{(\alpha-k)!}{(\alpha-j)!}\frac{(k-1)!}{j!}\left(\frac{p}{q}\right)^{j-k}\nonumber\\
&=\frac{1}{k}\sum_{j=k}^{\alpha}\frac{(\alpha-k)!}{(\alpha-j)!}\frac{k!}{j!}\left(\frac{p}{q}\right)^{j-k}\nonumber\\
&\le \frac{1}{\alpha p}\left(1+\frac{(\alpha-k)p}{(k+1)q}+\sum_{j=k+2}^{\alpha}\frac{(\alpha-k)!}{(\alpha-j)!}\frac{k!}{j!}\left(\frac{p}{q}\right)^{j-k}\right)\nonumber\\
&= \frac{1}{\alpha p}\left(1+\frac{(\alpha-k)p}{(k+1)q}+\frac{1}{(k+1)}\sum_{j=k+2}^{\alpha}\frac{(\alpha-k)!}{(\alpha-j)!}\frac{(k+1)!}{j!}\left(\frac{p}{q}\right)^{j-k}\right)\nonumber\\
&= \frac{1}{\alpha p}\left(1+\frac{(\alpha-k)p}{(k+1)q}+\frac{1}{(k+1)}\sum_{j=k+2}^{\alpha}\frac{(\alpha-k)!}{(\alpha-j)!}\frac{1}{(k+2)\ldots j}\left(\frac{p}{q}\right)^{j-k}\right)\nonumber\\
&\le \frac{1}{\alpha p}\left(1+\frac{(\alpha-k)p}{(k+1)q}+\frac{1}{(k+1)}\sum_{j=k+2}^{\alpha}\frac{(\alpha-k)!}{(\alpha-j)!(j-k)!}\left(\frac{p}{q}\right)^{j-k}\right)\nonumber\\
&= \frac{1}{\alpha p}\left(1+\frac{(\alpha-k)p}{(k+1)q}+\frac{1}{(k+1)}\sum_{j=2}^{\alpha-k}\binom{\alpha-k}{j}\left(\frac{p}{q}\right)^{j}\right)\nonumber\\
&= \frac{1}{\alpha p}\left(1+\frac{(\alpha-k)p}{(k+1)q}+\frac{1}{(k+1)}\left(q^{k-\alpha}-\frac{(\alpha-k)p}{q}-1\right)\right)\nonumber\\
&\le \frac{1}{\alpha p}\left(1+\frac{q^{k-\alpha}-1}{z}\right).\label{11:eq6}
\end{align}
Also, for $k\ge z$ and $k\le \alpha p$, we have
\begin{align}
0&<\sum_{j=k}^{\alpha}\frac{(\alpha-k)!}{(\alpha-j)!}\frac{(k-1)!}{j!}\left(\frac{p}{q}\right)^{j-k}\nonumber\\
&=\frac{1}{\alpha-k+1}\sum_{j=k}^{\alpha}\frac{(\alpha-k+1)!}{(\alpha-j)!}\frac{(k-1)!}{j!}\left(\frac{p}{q}\right)^{j-k}\nonumber\\
&=\frac{1}{\alpha-k+1}\left(\frac{\alpha-k+1}{k}+\frac{1}{k} \sum_{j=k+1}^{\alpha}\frac{(\alpha-k+1)!}{(\alpha-j)!}\frac{k!}{j!}\left(\frac{p}{q}\right)^{j-k}\right)\nonumber\\
&= \frac{1}{\alpha-k+1}\left(\frac{\alpha-k+1}{k}+\frac{1}{k} \sum_{j=k+1}^{\alpha}\frac{(\alpha-k+1)!}{(\alpha-j)!}\frac{1}{(k+1)\ldots j}\left(\frac{p}{q}\right)^{j-k}\right)\nonumber\\
&\le \frac{1}{\alpha-k+1}\left(\frac{\alpha-k+1}{k}+\frac{1}{k} \sum_{j=k+1}^{\alpha}\frac{(\alpha-k+1)!}{(\alpha-j)!(j-k+1)!}\left(\frac{p}{q}\right)^{j-k}\right)\nonumber\\
&= \frac{1}{\alpha-k+1}\left(\frac{\alpha-k+1}{k}+\frac{1}{k} \sum_{j=1}^{\alpha}\binom{\alpha-k+1}{j+1}\left(\frac{p}{q}\right)^{j}\right)\nonumber\\
&= \frac{1}{\alpha-k+1}\left(\frac{\alpha-k+1}{k}+\frac{1}{k} \left(\frac{q^{k-\alpha}-1}{p}-(\alpha-k)\right)\right)\nonumber\\
&=\frac{1}{\alpha-k+1}\left(\frac{1}{k}+\frac{q^{k-\alpha}-1}{kp}\right)\nonumber\\
&\le \frac{1}{(\alpha-k+1)k}+\frac{q^{k-\alpha}-1}{(\alpha-k)zp}\nonumber\\
&\le \frac{1}{\alpha}+\frac{q^{k-\alpha}-1}{\alpha pqz}\nonumber\\
&\le\frac{1}{\alpha p}\left(1+\frac{q^{k-\alpha}-1}{qz}\right).\label{11:eq7}
\end{align}
Therefore, form \eqref{11:eq6} and \eqref{11:eq7}, for $k\ge z$, we have
\begin{align}
0<h_2(k)&=\sum_{j=k}^{\alpha}\frac{(\alpha-k)!}{(\alpha-j)!}\frac{(k-1)!}{j!}\left(\frac{p}{q}\right)^{j-k}\mathbb{E}(B_{\alpha,p}-z)^+ \nonumber\\
&\le 1+\frac{q^{k-\alpha}-1}{qz}.\label{11:f2}
\end{align}
Hence, from \eqref{11:eq1}, \eqref{11:f1} and \eqref{11:f2}, the result follows for $k\ge z$.\\
Next, let $2\le k <z$. Note that
\begin{align}
0<h_1(k)&=\sum_{j=\ceil{z}}^{\alpha}\frac{(\alpha-k)!}{(\alpha-j)!}\frac{(k-1)!}{j!}\left(\frac{p}{q}\right)^{j-k}(j-z)\nonumber\\
&\le\frac{1}{z} \sum_{j=\ceil{z}}^{\alpha}\frac{(\alpha-k)!}{(\alpha-j)!}\frac{(k-1)!}{(j-1)!}\left(\frac{p}{q}\right)^{j-k}(j-z)\nonumber\\
&\le\frac{1}{z} \sum_{j=k+1}^{\alpha}\frac{(\alpha-k)!}{(\alpha-j)!}\frac{(k-1)!}{(j-1)!}\left(\frac{p}{q}\right)^{j-k}(j-z)^+\nonumber\\
&\le\frac{1}{z} \sum_{j=k+1}^{\alpha}\frac{(\alpha-k)!}{(\alpha-j)!}\frac{(k-1)!}{(j-2)!}\left(\frac{p}{q}\right)^{j-k}\nonumber\\
&=\frac{1}{z} \left(\frac{(\alpha-k)p}{q}+\sum_{j=k+2}^{\alpha}\frac{(\alpha-k)!}{(\alpha-j)!}\frac{1}{k(k+1)\ldots (j-2)}\left(\frac{p}{q}\right)^{j-k}\right)\nonumber\\
&\le\frac{1}{z} \left(\frac{(\alpha-k)p}{q}+\sum_{j=k+2}^{\alpha}\frac{(\alpha-k)!}{(\alpha-j)!}\frac{1}{2.3\ldots(j-k)}\left(\frac{p}{q}\right)^{j-k}\right)\nonumber\\
&=\frac{1}{z} \left(\frac{(\alpha-k)p}{q}+\sum_{j=2}^{\alpha-k}\binom{\alpha-k}{j}\left(\frac{p}{q}\right)^{j}\right)\nonumber\\
&=\frac{q^{k-\alpha}-1}{z}.\label{11:eq4}
\end{align}
Next, consider
\begin{align*}
0&<\sum_{j=k}^{\alpha}\frac{(\alpha-k)!}{(\alpha-j)!}\frac{(k-1)!}{j!}\left(\frac{p}{q}\right)^{j-k}\\
&=\frac{1}{k}\sum_{j=k}^{\alpha}\frac{(\alpha-k)!}{(\alpha-j)!}\frac{k!}{j!}\left(\frac{p}{q}\right)^{j-k}\\
&=\frac{1}{k}\left(1+\sum_{j=k+1}^{\alpha}\frac{(\alpha-k)!}{(\alpha-j)!}\frac{1}{(k+1)\ldots j}\left(\frac{p}{q}\right)^{j-k}\right)\\
&\le\frac{1}{k}\left(1+2\sum_{j=k+1}^{\alpha}\frac{(\alpha-k)!}{(\alpha-j)!(j-k+2)!}\left(\frac{p}{q}\right)^{j-k}\right)\\
&\le\frac{1}{k}\left(1+\frac{2}{(\alpha-k)(\alpha-k+1)}\sum_{j=1}^{\alpha-k}\binom{\alpha-k+2}{j+2}\left(\frac{p}{q}\right)^{j}\right)\\
&=\frac{1}{k}\left(1+\frac{2}{(\alpha-k)(\alpha-k+1)}\left(\frac{2q^{k-\alpha}-2-2(\alpha-k)p-(\alpha-k)(\alpha-k+1)p^2}{2p^2}\right)\right)\\
&\le \frac{2(q^{k-\alpha}-1)}{(\alpha-k)(\alpha-k+1)kp^2}\\
&\le  \frac{2(q^{k-\alpha}-1)}{\alpha(\alpha-z)p^2}.
\end{align*}
Note that, for $k<z\le \alpha$, we have
\begin{align*}
\mathbb{E}(B_{\alpha,p}-z)^+&=\sum_{m=\ceil{z}}^{\alpha}(m-z)\binom{\alpha}{m}p^mq^{\alpha-m}\\
&\le (\alpha-z)\sum_{m=\ceil{z}}^{\alpha}\frac{\alpha!}{(\alpha-m!)m!}p^mq^{\alpha-m}\\
&\le \frac{\alpha(\alpha-z)}{z}\sum_{m=1}^{\alpha}\binom{\alpha-1}{m-1}p^mq^{\alpha-m}\\
&=\frac{\alpha(\alpha-z)p}{z}
\end{align*}
Therefore,
\begin{align}
0<h_2(k)&=\sum_{j=k}^{\alpha}\frac{(\alpha-k)!}{(\alpha-j)!}\frac{(k-1)!}{j!}\left(\frac{p}{q}\right)^{j-k}\mathbb{E}(B_{\alpha,p}-z)^+\nonumber\\
&\le \frac{2(q^{k-\alpha}-1)}{pz}.\label{11:eq5}
\end{align}
Hence, from \eqref{11:eq1}, \eqref{11:eq4} and \eqref{11:eq5}, the result follows for $2\le k<z \le \alpha$.\\
Next, for $1=k<z$, we have
\begin{align*}
0<h_1(1)&=\sum_{j=\ceil{z}}^{\alpha}\frac{(\alpha-1)!}{(\alpha-j)!j!}\left(\frac{p}{q}\right)^{j-1}(j-z)\\
&\le \frac{\alpha-1}{z}\sum_{j=2}^{\alpha}\binom{\alpha-2}{j-2}\left(\frac{p}{q}\right)^{j-1}\\
&=\frac{(\alpha-1)pq^{1-\alpha}}{z}.
\end{align*}
and 
\begin{align*}
0<h_2(1)&=\sum_{j=1}^{\alpha}\frac{(\alpha-1)!}{(\alpha-j)!j!}\left(\frac{p}{q}\right)^{j-1}\mathbb{E}(B_{\alpha,p}-z)^+\\
&\le \left(\frac{q^{1-\alpha}-q}{\alpha p}\right)\left(\frac{\alpha(\alpha-1)p^2}{z}\right)\\
&=\frac{(\alpha-1)pq^{1-\alpha}}{z}.
\end{align*}
This proves the result.
\end{proof}

\section{Bounds for Binomial Approximation}\label{11:sec3}
In this section, we obtain the error bounds for binomial approximation to locally dependent CDO. We derive the results for the stop-loss metric under certain conditions on moments. Moreover, we demonstrate the results under an independent setup. It is shown that binomial distribution is more suitable for a CDO using the numerical comparison between our bounds and the existing bound given by Neammanee and Yonghint \cite{NY}.\\
In Yonghint {\em et al.} \cite{YNC2}, it is shown that the locally dependent CDO is useful in real-life applications in various aspects.  So, we consider a similar locally dependent structure that can also be used for independent setup. Let $X_1,X_2,\ldots,X_n$ be a collection of random variables such that $X_i$ is independent of $X_{A_i^c}$, while $X_{A_i}$ is independent of $X_{B_i^c}$, where $i\in A_i\subseteq B_i\subset \{1,2,\ldots,n\}$, $i=1,2,\ldots,n$. Here, $X_A$ denotes the collection of random variables $\{X_i,i \in A\}$, and $A^c$ denotes the complement of the set $A$. Note that if $A_i=B_i=\{i\}$, then $X_1,X_2,\ldots,X_n$ become independent random variables. See Kumar \cite{k2021, kumar2022}, R\"{o}llin \cite{RO2008} and \v{C}ekanavi\v{c}ius and Vellaisamy \cite{CV2015,CV2021} for a similar locally dependent setup.  

\noindent Let $\tau_i$ is the default time of the $i$-th portfolio  and $X_i$ henceforth denote the random variable ${\bf 1}_{\{\tau_i\le T\}}$ with $\mathbb{P}(X_i=1)=p_i=1-q_i=1-\mathbb{P}(X_i=0)$. Consider
\begin{align}
W_n=\sum_{i=1}^{n}X_i, \label{11:in2}
\end{align}
which is a key factor of percentage loss up to time $T$ defined in \eqref{11:111}. Our aim is to approximate $W_n$ by a suitable binomial random variable.

\noindent Throughout this section, let $g_z=g$,
\begin{align}
W_i=W_n-X_{A_i}=\sum_{j\notin A_i}X_i\quad \text{and}\quad W_{i}^{*}=W_n-X_{B_i}=\sum_{j\notin B_i}X_i.\label{11:opa2}
\end{align}
Note that $X_i$ is independent of $W_i$. Also, $X_i$ and $X_{A_i}$ are independent of $W_i^{*}$. \\
First, we choose one parameter of binomial distribution of our choice, and accordingly, another parameter can be obtained. Since $\alpha$ should be a positive integer (number of identical Bernoulli trials) for the binomial distribution, we choose $\alpha = n$ and let 
\begin{align}
p=\frac{1}{\alpha}\sum_{i=1}^{n}\mathbb{E}(X_i)=\frac{1}{n}\sum_{i=1}^{n}p_i.\label{11:opa1}
\end{align}
The following theorem gives the error in the approximation between $\text{B}_{\alpha,p}$ and $W_n$.

\begin{theorem}\label{11:opath1}
Let $\text{B}_{\alpha,p}$ and $W_n$ be defined as in \eqref{11:in1} and \eqref{11:in2}, respectively, and satisfy \eqref{11:opa1}. Then
\begin{align}
d_{sl}(W_n,\text{B}_{n,p})\le \frac{2}{pq^n}\sum_{i=1}^{n}\left[\mathbb{E}\left((X_i+p)q^{W_i}\right)-\mathbb{E}\left((p_i+qX_i)q^{W_n}\right)\right].\label{11:ww1}
\end{align}
where $d_{sl}$ denote the stop-loss distance defined in \eqref{11:eq15}.
\end{theorem}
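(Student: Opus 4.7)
The plan is to apply the Stein's method machinery developed in Section~\ref{11:sec2}. By the Stein equation~\eqref{11:eq40}, for every $z\ge 0$,
\[
\mathbb{E}(W_n-z)^+ - \mathbb{E}(\text{B}_{n,p}-z)^+ \;=\; \mathbb{E}[\mathscr{A}g_z(W_n)],
\]
so $d_{sl}(W_n,\text{B}_{n,p}) \le \sup_{z\ge 0}|\mathbb{E}[\mathscr{A}g_z(W_n)]|$ (and for $z<0$ the left-hand side is $0$ under \eqref{11:opa1}). Substituting the Stein operator \eqref{11:in3} with $\alpha=n$, and expanding via $W_n=\sum_{i=1}^{n}X_i$ and $n-W_n=\sum_{i=1}^{n}(1-X_i)$, one would decompose
\[
\mathbb{E}[\mathscr{A}g_z(W_n)] \;=\; \sum_{i=1}^{n}\left(\tfrac{p}{q}\,\mathbb{E}[(1-X_i)\,g_z(W_n+1)] \;-\; \mathbb{E}[X_i\,g_z(W_n)]\right).
\]

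For each summand I would exploit two consequences of the local dependence setup~\eqref{11:opa2}. First, $X_i$ is independent of $W_i$, so $\mathbb{E}[X_i\,h(W_i)]=p_i\mathbb{E}[h(W_i)]$ for any function $h$; in particular, $\mathbb{E}[(1-X_i)q^{W_i}]=q_i\mathbb{E}[q^{W_i}]$ and $\mathbb{E}[X_i q^{W_i}]=p_i\mathbb{E}[q^{W_i}]$. Second, $W_n-W_i=X_{A_i}\ge X_i\ge 0$, which gives the pointwise inequality $q^{W_n}\le q^{W_i}$ and, more usefully, $(1-X_i)q^{W_n}\le(1-X_i)q^{W_i}$. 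With these in hand I would invoke the uniform bound $|g_z(k)|\le 2q^{k-n}$ from \eqref{11:ww7}, applied separately to $g_z(W_n+1)$ and $g_z(W_n)$. The first Stein-operator summand is thereby controlled by a term involving $\mathbb{E}[q^{W_i}]$ and $\mathbb{E}[X_i q^{W_i}]$, whereas the second is controlled by $\mathbb{E}[X_i q^{W_n}]$; the arithmetic mismatch between $p$ and $p_i$ contributes an $\mathbb{E}[q^{W_n}]$ piece through the identity $np=\sum_i p_i$ from~\eqref{11:opa1}.

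The hardest part is the precise algebraic reorganization into the stated form. The prefactor $\tfrac{2}{pq^n}$ and the signed difference $\mathbb{E}((X_i+p)q^{W_i})-\mathbb{E}((p_i+qX_i)q^{W_n})$ inside the sum indicate that a naive triangle-inequality bound on the $i$-th summand is not enough: one has to keep track of the signs of the two Stein-operator pieces, push the $q^{W_n}$ arising from the $(1-X_i)g_z(W_n+1)$ term over to $q^{W_i}$ via the comparison above (which is where the $\mathbb{E}(X_i q^{W_i})=p_i\mathbb{E}(q^{W_i})$ factor and hence the $(X_i+p)q^{W_i}$ grouping emerge), and then use $q=1-p$ and \eqref{11:opa1} to collect the remaining $\mathbb{E}(q^{W_n})$ and $\mathbb{E}(X_i q^{W_n})$ contributions into the combination $(p_i+qX_i)q^{W_n}$. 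Combining the $n$ summands and the supremum over $z\ge 0$ then yields the announced bound.
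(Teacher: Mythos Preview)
Your decomposition of $\mathbb{E}[\mathscr{A}g_z(W_n)]$ into per-$i$ summands is fine, but the proposal then takes a wrong turn: you plan to bound $|g_z(W_n+1)|$ and $|g_z(W_n)|$ directly via \eqref{11:ww7}. That cannot produce the stated inequality. The function $g_z$ has no fixed sign, so there is no way to ``keep track of signs'' and arrange the pieces into the signed difference $\mathbb{E}((X_i+p)q^{W_i})-\mathbb{E}((p_i+qX_i)q^{W_n})$; once you pass to $|g_z(k)|\le 2q^{k-n}$ termwise, you obtain something like $\tfrac{2}{q^n}\bigl(np\,\mathbb{E}(q^{W_n})+q\,\mathbb{E}(W_nq^{W_n})\bigr)$, which does not even vanish when $p_i\equiv p$ (i.e.\ when $W_n$ is exactly binomial). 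That alone shows the approach is too crude for the claimed bound.

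The step you are missing is the one the paper actually uses: after writing $q\,\mathbb{E}[\mathscr{A}g(W_n)]=\sum_i p_i\,\mathbb{E} g(W_n+1)-\sum_i\mathbb{E}[X_ig(W_n+1)]+q\sum_i\mathbb{E}[X_i\Delta g(W_n)]$, one \emph{centers} using the independence of $X_i$ and $W_i$, inserting and removing $\mathbb{E}(X_i)\mathbb{E} g(W_i+1)=\mathbb{E}[X_ig(W_i+1)]$. This converts the first two sums into expectations of the telescoping difference $g(W_n+1)-g(W_i+1)=\sum_{j=1}^{X_{A_i}}\Delta g(W_i+j)$. Every surviving term now involves $\Delta g$, and it is Lemma~\ref{11:smle1} (the bound $|\Delta g_z(k)|\le 2q^{k-\alpha}$), not \eqref{11:ww7}, that is applied. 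Summing the resulting geometric series $\sum_{j=1}^{X_{A_i}}q^{W_i+j}$ is exactly what yields the factor $q^{W_i}(1-q^{X_{A_i}})=q^{W_i}-q^{W_n}$ and hence the announced combination $\mathbb{E}((X_i+p)q^{W_i})-\mathbb{E}((p_i+qX_i)q^{W_n})$. In short: the key lemma is Lemma~\ref{11:smle1} on $\Delta g$, reached via the $W_i$-centering trick; bounding $|g_z|$ directly does not work here.
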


\begin{proof}
Multiplying the Stein operator \eqref{11:in3} by $q$ and taking expectation with respect to $W_n$, we have
\begin{align}
q\mathbb{E}[\mathscr{A}g(W_n)]&=\alpha p\mathbb{E}(g(W_n+1))-p\mathbb{E}(W_ng(W_n+1))-q\mathbb{E}(W_ng(W_n)).\label{11:eq61}
\end{align}
Using \eqref{11:opa1}, we get
\begin{align*}
q\mathbb{E}[\mathscr{A}g(W_n)]&=\sum_{i=1}^{n}\mathbb{E}(X_i)\mathbb{E}(g(W_n+1))-\sum_{i=1}^{n}\mathbb{E}(X_ig(W_n+1))+q\sum_{i=1}^{n}\mathbb{E}(X_i \Delta g(W_n))
\end{align*}
From \eqref{11:opa2}, note that $X_i$ and $W_i$ are independent. Therefore, the above expression can be written as
\begin{align}
q\mathbb{E}[\mathscr{A}g(W_n)]&=\sum_{i=1}^{n}\mathbb{E}(X_i)\mathbb{E}(g(W_n+1)-g(W_i+1))+q\sum_{i=1}^{n}\mathbb{E}(X_i \Delta g(W_n))\nonumber\\
&~~~-\sum_{i=1}^{n}\mathbb{E}(X_i(g(W_n+1)-g(W_i+1)))\nonumber\\
&=\sum_{i=1}^{n}\mathbb{E}(X_i)\mathbb{E}(g(W_i+X_{A_i}+1)-g(W_i+1))+q\sum_{i=1}^{n}\mathbb{E}(X_i \Delta g(W_n))\nonumber\\
&~~~-\sum_{i=1}^{n}\mathbb{E}(X_i(g(W_i+X_{A_i}+1)-g(W_i+1)))\nonumber\\
&=\sum_{i=1}^{n}\mathbb{E}(X_i)\mathbb{E}\left(\sum_{j=1}^{X_{A_i}}\Delta g(W_i+j)\right)+q\sum_{i=1}^{n}\mathbb{E}(X_i \Delta g(W_n))\nonumber\\
&~~~-\sum_{i=1}^{n}\mathbb{E}\left(X_i\sum_{j=1}^{X_{A_i}}\Delta g(W_i+j)\right).\label{11:ww5}
\end{align}
Using Lemma \ref{11:smle1}, we get
\begin{align*}
|\mathbb{E}[\mathscr{A}g(W_n)]|&\le \frac{2}{pq^n}\sum_{i=1}^{n}p_i\mathbb{E}\left(q^{W_i}(1-q^{X_{A_i}})\right)+\frac{2}{q^n}\sum_{i=1}^{n}\mathbb{E}\left(X_i q^{W_n}\right)\\
&~~~+\frac{2}{pq^n}\sum_{i=1}^{n}\mathbb{E}\left(X_iq^{W_i}\left(1-q^{X_{A_i}}\right)\right)\\
&=\frac{2}{pq^n}\sum_{i=1}^{n}\left[\mathbb{E}\left((X_i+p)q^{W_i}\right)-\mathbb{E}\left((p_i+qX_i)q^{W_n}\right)\right].
\end{align*}
This proves the result.
\end{proof}

\begin{corollary}\label{11:cor1}
Let $\text{B}_{\alpha,p}$ and $W_n$ be defined as in \eqref{11:in1} and \eqref{11:in2}, respectively, and satisfy \eqref{11:opa1}. Assume $X_1,X_2,\ldots,X_n$ are independent random variables then
\begin{align}
d_{sl}(W_n,\text{B}_{n,p})\le \frac{2}{q^n}\sum_{i=1}^{n}|p-p_i|p_i \prod_{j\neq i}(1-pp_j).\label{11:eq50}
\end{align}
\end{corollary}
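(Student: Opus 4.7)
The plan is to go back to the Stein identity \eqref{11:ww5} established in the proof of Theorem \ref{11:opath1} and specialize it to the independent setting $A_i=\{i\}$, rather than substitute directly into the theorem's final bound. The reason is that the independence $X_i\perp W_i$ together with $X_i\in\{0,1\}$ produces a cancellation among the three terms of \eqref{11:ww5} that is lost once absolute values are pulled inside term-by-term; preserving this cancellation is precisely what produces the extra $p_i$ factor in \eqref{11:eq50}.

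First I would note that when $A_i=\{i\}$ the inner sum collapses, $\sum_{j=1}^{X_{A_i}}\Delta g(W_i+j)=X_i\Delta g(W_i+1)$, and since $W_n=W_i+X_i$ with $X_i\in\{0,1\}$ one also has $X_i\Delta g(W_n)=X_i\Delta g(W_i+1)$. Using $X_i\perp W_i$ and $X_i^2=X_i$, the three summands of \eqref{11:ww5} become $\sum_i p_i^2\,\mathbb{E}[\Delta g(W_i+1)]$, $q\sum_i p_i\,\mathbb{E}[\Delta g(W_i+1)]$, and $\sum_i p_i\,\mathbb{E}[\Delta g(W_i+1)]$ respectively. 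Combining them and using $p_i^2+qp_i-p_i=p_i(p_i-p)$ gives
\begin{align*}
\mathbb{E}[\mathscr{A}g(W_n)]=\frac{1}{q}\sum_{i=1}^{n}p_i(p_i-p)\,\mathbb{E}[\Delta g(W_i+1)].
\end{align*}

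Next I would apply Lemma \ref{11:smle1} in the form $|\Delta g(k)|\le 2q^{k-n}$ for $1\le k\le n$, which is applicable since $W_i+1\in\{1,\ldots,n\}$. This yields $|\mathbb{E}[\Delta g(W_i+1)]|\le (2q/q^n)\,\mathbb{E}[q^{W_i}]$. The independence of $X_1,\ldots,X_n$ then allows the direct computation $\mathbb{E}[q^{W_i}]=\prod_{j\ne i}(q_j+p_jq)=\prod_{j\ne i}(1-pp_j)$. Taking absolute values, summing over $i$, and finally taking the supremum over $z\ge 0$ in the Stein equation \eqref{11:eq40} produces the bound \eqref{11:eq50}.

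The main obstacle to flag is conceptual rather than computational: one must resist the temptation to quote the final inequality of Theorem \ref{11:opath1} and instead re-enter its proof at identity \eqref{11:ww5}, because the $p_i$ factor in the corollary is produced only by the exact algebraic cancellation $p_i^2+qp_i-p_i=p_i(p_i-p)$; applying $|\cdot|$ before this cancellation is exploited would yield a strictly weaker bound without the $p_i$ weight.
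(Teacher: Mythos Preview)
Your proposal is correct and follows essentially the same route as the paper: both re-enter the proof of Theorem~\ref{11:opath1} at identity \eqref{11:ww5}, specialize to $A_i=\{i\}$, exploit $X_i\Delta g(W_n)=X_i\Delta g(W_i+1)$ and the cancellation $p_i^2+qp_i-p_i=p_i(p_i-p)$, and then apply Lemma~\ref{11:smle1} together with $\mathbb{E}[q^{W_i}]=\prod_{j\ne i}(1-pp_j)$. Your explicit remark that one must combine the three terms \emph{before} taking absolute values is exactly the point the paper's derivation relies on implicitly.
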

\begin{proof}
Substituting $A_i=\{i\}$ in \eqref{11:ww5}, it can be easily verified that
\begin{align*}
q\mathbb{E}[\mathscr{A}g(W_n)]&=\sum_{i=1}^{n}p_i^2\mathbb{E}(\Delta g(W_i+1))+q\sum_{i=1}^{n}p_i\mathbb{E}(\Delta g(W_i+1))-\sum_{i=1}^{n}p_i\mathbb{E}(\Delta g(W_i+1))\\
&=\sum_{i=1}^{n}(p_i-p)p_i\mathbb{E}(\Delta g(W_i+1)).
\end{align*}
Hence, using Lemma \ref{11:smle1}, we get
\begin{align*}
|\mathbb{E}[\mathscr{A}g(W_n)]|&\le \frac{2}{q^n}\sum_{i=1}^{n}|p-p_i|p_i\mathbb{E}\left(q^{W_i}\right)=\frac{2}{q^n}\sum_{i=1}^{n}|p-p_i|p_i \prod_{j\neq i}(1-pp_j).
\end{align*}
This proves the result.
\end{proof}

\begin{remark} \em
\begin{enumerate}
\item[(i)] Note that if $p_i=p$, $i=1,2,\ldots,n$, in \eqref{11:eq50} then $d_{sl}(W_n,\text{B}_{n,p})=0$, as expected.
\item[(ii)] In Theorem \ref{11:opath1} and Corollary \ref{11:cor1}, the bounds become shaper for sufficiently small values of $p_i$.
\item[(iii)] If we choose the parameter $p$ of our choice, then $\alpha=\frac{1}{p}\sum_{i=1}^{n}\mathbb{E}(X_i)$. In this situation, the parameter $\alpha$ may not be an integer. So, we can take 
\begin{align*}
\alpha=\floor{\frac{1}{p}\sum_{i=1}^{n}\mathbb{E}(X_i)},
\end{align*} 
where $\floor{x}$ is the  integer part of $x>0$, and use \eqref{11:ww7} and \eqref{11:eq70} to get
\begin{align}
d_{sl}(W_n,\text{B}_{\alpha,p})\le\frac{2}{pq^\alpha}\sum_{i=1}^{n}\left[\mathbb{E}\left((X_i+p)q^{W_i}\right)-\mathbb{E}\left((p_i+qX_i-\delta p^2)q^{W_n}\right)\right].\label{11:ww6}
\end{align}
Therefore, it is suggested to consider the minimum of the bounds given in \eqref{11:ww1} and \eqref{11:ww6}.
\end{enumerate}
\end{remark}

\noindent
If we choose $\alpha p=\mathbb{E}(W_n)$ and $\alpha p q=\mathrm{Var}(W_n)$ (the first two moments matching condition), then the choice of $\alpha$ may not be a positive integer. So, we choose
\begin{align}
\alpha=\floor{\frac{(\mathbb{E}(W_n))^2}{\mathbb{E}(W_n)-\mathrm{Var}(W_n)}}\quad \text{and}\quad p=\frac{\mathbb{E}(W_n)-\mathrm{Var}(W_n)}{\mathbb{E}(W_n)}.\label{11:opa3}
\end{align}
Also, define $D(Z):=2d_{TV}(Z,Z+1)$ and
\begin{align*}
\delta:=\frac{(\mathbb{E}(W_n))^2}{\mathbb{E}(W_n)-\mathrm{Var}(W_n)}-\alpha.
\end{align*}
Observe that $0\le \delta<1$,
\begin{align}
\alpha p=\mathbb{E}(W_n)-\delta p\quad \text{and}\quad \alpha p q=\mathrm{Var}(W_n)-\delta pq\label{11:eq62}
\end{align}
The next result gives the error in approximation between $\text{B}_{\alpha,p}$ and $W_n$ satisfying the above conditions.
\begin{theorem}\label{11:opath2}
Let $\text{B}_{\alpha,p}$ and $W_n$ be as defined in \eqref{11:in1} and \eqref{11:in2}, respectively, and satisfy \eqref{11:opa3}. Then
\begin{align}
d_{sl}(W_n,\text{B}_{\alpha,p})&\le \frac{2}{p^2q^{\alpha}}\left\{\sum_{i=1}^{n}\mathbb{E}(X_i)\mathbb{E}\left(\left(pX_{A_i}+q^{X_{B_i}}\left(1-q^{-X_{A_i}}\right)\right)D(W_i^*|X_{A_i},X_{B_i})\right)\right.\nonumber\\
&~~~+\sum_{i=1}^{n}\mathbb{E}\left(X_i\left(pX_{A_i}+q^{X_{B_i}}\left(1-q^{-X_{A_i}}\right)\right)D(W_i^*|X_i,X_{A_i},X_{B_i})\right)\nonumber\\
&~~~+p\sum_{i=1}^{n}\mathbb{E}\left(X_i\left(q^{B_i}-q\right)D(W_i^*|X_{B_i})\right)+\frac{\delta p^3}{q}\mathbb{E}(q^{W_n})\nonumber\\
&~~~\left.+\frac{p}{q}\sum_{i=1}^{n}|\mathbb{E}(X_i)\mathbb{E}(X_{A_i})-\mathbb{E}(X_iX_{A_i})+q\mathbb{E}(X_i)|\mathbb{E}\left(\left(q^{B_i}-q\right)D(W_i^*|X_{B_i})\right)\right\}.\label{11:eq11}
\end{align}
\end{theorem}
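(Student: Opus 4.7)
The plan is to adapt the argument of Theorem \ref{11:opath1} to the two-moment-matching regime \eqref{11:opa3}--\eqref{11:eq62}. Matching both the mean and the variance will cancel the leading and second-order contributions in $q\,\mathbb{E}[\mathscr{A}g(W_n)]$, leaving residuals that depend only on the local block $(X_i,X_{A_i},X_{B_i})$ and on the total-variation smoothing coefficient $D(W_i^*|\cdot)$ of the conditionally independent remainder $W_i^*$. Starting, as in \eqref{11:eq61}, from
\begin{align*}
q\,\mathbb{E}[\mathscr{A}g(W_n)]=\alpha p\,\mathbb{E}(g(W_n+1))-p\,\mathbb{E}(W_n g(W_n+1))-q\,\mathbb{E}(W_n g(W_n)),
\end{align*}
I would substitute $\alpha p=\mathbb{E}(W_n)-\delta p$ from \eqref{11:eq62}, expand $\mathbb{E}(W_n)=\sum_i\mathbb{E}(X_i)$ and $W_n=\sum_i X_i$, and isolate the single stand-alone term $-\delta p\,\mathbb{E}(g(W_n+1))$. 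Applying $|g(k)|\le 2q^{k-\alpha}$ from \eqref{11:ww7} to this term accounts for the $\delta p^3 \mathbb{E}(q^{W_n})/q$ summand inside the braces of \eqref{11:eq11}.

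For each remaining $i$-indexed summand, the plan is to use the decompositions $W_n=W_i+X_{A_i}=W_i^*+X_{B_i}$ of \eqref{11:opa2}, together with the independence relations $X_i\perp W_i$ and $(X_i,X_{A_i},X_{B_i})\perp W_i^*$, and to telescope differences of the form $g(W_i^*+X_{B_i}+a)-g(W_i^*+b)$ as sums of $\Delta g$'s. The mean-matching identity extracts a leading factor $p\,X_{A_i}$ (since the expected shift from $W_i$ to $W_n$ equals $X_{A_i}$ times the Bernoulli success probability $p$), while the variance-matching identity combines with the local-dependence formula
\begin{align*}
\mathrm{Var}(W_n)=\sum_{i=1}^n\bigl(\mathbb{E}(X_i X_{A_i})-\mathbb{E}(X_i)\mathbb{E}(X_{A_i})\bigr)
\end{align*}
(which follows from $\mathrm{Cov}(X_i,W_n)=\mathrm{Cov}(X_i,X_{A_i})$ by the local independence) to pair with the covariance-type residuals $\mathbb{E}(X_i)\mathbb{E}(X_{A_i})-\mathbb{E}(X_i X_{A_i})+q\,\mathbb{E}(X_i)$ that appear in the last line of \eqref{11:eq11}. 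The hybrid exponential $q^{X_{B_i}}(1-q^{-X_{A_i}})=q^{X_{B_i}}-q^{X_{B_i}-X_{A_i}}$ emerges naturally once the weight $|g(k)|\le 2q^{k-\alpha}$ is inserted into the telescoped differences. To finish, I would condition on $(X_i,X_{A_i},X_{B_i})$---on which $W_i^*$ does not depend---and apply the total-variation smoothing inequality
\begin{align*}
|\mathbb{E}(\Delta f(W_i^*)\mid X_i,X_{A_i},X_{B_i})|\le \|f\|_\infty\, D(W_i^*|X_i,X_{A_i},X_{B_i}),
\end{align*}
taking $f$ to be $g$ weighted by the appropriate power of $q^{-\cdot}$. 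This converts the telescoped $\Delta g$-differences into the conditional $D(W_i^*|\cdot)$ factors in \eqref{11:eq11}; dividing the final inequality by $q$ and absorbing the $q$-weights into $q^{-\alpha}$ yields the prefactor $2/(p^2 q^{\alpha})$.

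The main obstacle I anticipate is the combinatorial bookkeeping in the telescoping step. For each of the three brackets in \eqref{11:eq11} one has to identify which portion of the expansion produces the factor $p\,X_{A_i}$, which contributes the correction $q^{X_{B_i}}(1-q^{-X_{A_i}})$, which carries the extra $X_i$ multiplier, and which yields the covariance residual $\mathbb{E}(X_i)\mathbb{E}(X_{A_i})-\mathbb{E}(X_i X_{A_i})+q\,\mathbb{E}(X_i)$---all while making sure that one factor of $1/p$ is recouped from the mean cancellation and another from the variance cancellation, so that the final prefactor is $1/p^2$ rather than the cruder $1/p$ one would obtain using only mean matching. Keeping signs, indices, and the interplay between the $q^{W_i^*}$ weighting and the $D(\cdot)$ factors aligned is where the real labor lies.
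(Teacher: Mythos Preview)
Your plan matches the paper's proof essentially step for step: start from \eqref{11:eq61}, insert $\alpha p=\mathbb{E}(W_n)-\delta p$ to reproduce \eqref{11:ww5} plus the residual $-\delta p\,\mathbb{E}(g(W_n+1))$, then centre each $\Delta g(W_i+j)$ and $\Delta g(W_n)$ at $\Delta g(W_i^*+1)$ so that the compensating terms collect into $\sum_i[\mathbb{E}(X_i)\mathbb{E}(X_{A_i})-\mathbb{E}(X_iX_{A_i})+q\mathbb{E}(X_i)]\,\mathbb{E}(\Delta g(W_i^*+1))$; the variance identity you state makes these coefficients sum to zero (via \eqref{11:eq62}), which lets the paper replace $\Delta g(W_i^*+1)$ by $\Delta g(W_i^*+1)-\Delta g(W_n)$ and telescope everything into double sums of $\Delta^2 g(W_i^*+\ell)$. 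Your smoothing inequality with $f(\cdot)=\Delta g(\cdot+\ell)$ is exactly the paper's step $|\mathbb{E}(\Delta^2 g(W_i^*+\ell)\mid\cdot)|\le 2q^{\ell-\alpha}D(W_i^*\mid\cdot)$, and summing the resulting geometric series in $\ell$ and $j$ is precisely what produces the factor $pX_{A_i}+q^{X_{B_i}}(1-q^{-X_{A_i}})$ together with the prefactor $2/(p^2q^{\alpha})$. The only minor slip is that the weight feeding the smoothing step is $|\Delta g(k)|\le 2q^{k-\alpha}$ from Lemma~\ref{11:smle1}, not the bound \eqref{11:ww7} on $|g|$; the latter is used solely for the $\delta p$ term, as you say.
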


\begin{proof}
From \eqref{11:eq61}, we have
\begin{align*}
q\mathbb{E}[\mathscr{A}g(W_n)]&=\alpha p\mathbb{E}(g(W_n+1))-p\mathbb{E}(W_ng(W_n+1))-q\mathbb{E}(W_ng(W_n)).
\end{align*}
Using \eqref{11:eq62} and following the steps similar to the proof of Theorem \ref{11:opath1}, we get
\begin{align}
q\mathbb{E}[\mathscr{A}g(W_n)]&=\sum_{i=1}^{n}\mathbb{E}(X_i)\mathbb{E}\left(\sum_{j=1}^{X_{A_i}}\Delta g(W_i+j)\right)-\sum_{i=1}^{n}\mathbb{E}\left(X_i\sum_{j=1}^{X_{A_i}}\Delta g(W_i+j)\right)\nonumber\\
&~~~+q\sum_{i=1}^{n}\mathbb{E}(X_i \Delta g(W_n))-\delta p\mathbb{E}(g(W_n+1)).\label{11:eq70}
\end{align}
Using \eqref{11:opa2} and \eqref{11:eq62}, the above expression leads to
\begin{align}
q\mathbb{E}[\mathscr{A}g(W_n)]&=\sum_{i=1}^{n}\mathbb{E}(X_i)\mathbb{E}\left(\sum_{j=1}^{X_{A_i}}(\Delta g(W_i+j)-\Delta g(W_i^*+1))\right)\nonumber\\
&~~~-\sum_{i=1}^{n}\mathbb{E}\left(X_i\sum_{j=1}^{X_{A_i}}(\Delta g(W_i+j)-\Delta g(W_i^*+1))\right)\nonumber\\
&~~~+q\sum_{i=1}^{n}\mathbb{E}(X_i (\Delta g(W_n)-\Delta g(W_i^*+1)))-\delta p\mathbb{E}(g(W_n+1))\nonumber\\
&~~~-\sum_{i=1}^{n}[\mathbb{E}(X_i)\mathbb{E}(X_{A_i})-\mathbb{E}(X_iX_{A_i})+q\mathbb{E}(X_i)]\mathbb{E}(g(W_n+1)-g(W_i^*+1))\nonumber\\
&=\sum_{i=1}^{n}\mathbb{E}(X_i)\mathbb{E}\left(\sum_{j=1}^{X_{A_i}}\sum_{\ell=1}^{X_{B_i\backslash A_i}+j-1}\Delta^2 g(W_i^*+\ell)\right)\nonumber\\
&~~~-\sum_{i=1}^{n}\mathbb{E}\left(X_i\sum_{j=1}^{X_{A_i}}\sum_{\ell=1}^{X_{B_i\backslash A_i}+j-1}\Delta^2 g(W_i^*+\ell)\right)\nonumber\\
&~~~+q\sum_{i=1}^{n}\mathbb{E}\left(X_i \sum_{\ell=1}^{X_{B_i}-1}\Delta^2 g(W_i^*+\ell)\right)-\delta p\mathbb{E}(g(W_n+1))\nonumber\\
&~~~-\sum_{i=1}^{n}[\mathbb{E}(X_i)\mathbb{E}(X_{A_i})-\mathbb{E}(X_iX_{A_i})+q\mathbb{E}(X_i)]\mathbb{E}\left(\sum_{\ell=1}^{X_{B_i}-1}\Delta^2 g(W_i^*+\ell)\right)\label{11:eq10}\\
&=\sum_{i=1}^{n}\mathbb{E}(X_i)\mathbb{E}\left(\sum_{j=1}^{X_{A_i}}\sum_{\ell=1}^{X_{B_i\backslash A_i}+j-1}\mathbb{E}(\Delta^2 g(W_i^*+\ell)|X_{A_i},X_{B_i})\right)\nonumber\\
&~~~-\sum_{i=1}^{n}\mathbb{E}\left(X_i\sum_{j=1}^{X_{A_i}}\sum_{\ell=1}^{X_{B_i\backslash A_i}+j-1}\mathbb{E}(\Delta^2 g(W_i^*+\ell)|X_i,X_{A_i},X_{B_i})\right)\nonumber\\
&~~~+q\sum_{i=1}^{n}\mathbb{E}\left(X_i \sum_{\ell=1}^{X_{B_i}-1}\mathbb{E}(\Delta^2 g(W_i^*+\ell)|X_{B_i})\right)-\delta p\mathbb{E}(g(W_n+1))\nonumber\\
&~~~-\sum_{i=1}^{n}[\mathbb{E}(X_i)\mathbb{E}(X_{A_i})-\mathbb{E}(X_iX_{A_i})+q\mathbb{E}(X_i)]\mathbb{E}\left(\sum_{\ell=1}^{X_{B_i}-1}\mathbb{E}(\Delta^2 g(W_i^*+\ell)|X_{B_i})\right)\nonumber
\end{align}
Note that $\mathbb{E}(\Delta^2 g(W_i^*+\ell)|\cdot)\le 2q^{\ell-\alpha}D(W_i^*|\cdot)$. Hence, using \eqref{11:ww7}, we get
\begin{align*}
|\mathbb{E}[\mathscr{A}g(W_n)]|&\le \frac{2}{p^2q^{\alpha}}\left\{\sum_{i=1}^{n}\mathbb{E}(X_i)\mathbb{E}\left(\left(pX_{A_i}+q^{X_{B_i}}\left(1-q^{-X_{A_i}}\right)\right)D(W_i^*|X_{A_i},X_{B_i})\right)\right.\\
&~~~+\sum_{i=1}^{n}\mathbb{E}\left(X_i\left(pX_{A_i}+q^{X_{B_i}}\left(1-q^{-X_{A_i}}\right)\right)D(W_i^*|X_i,X_{A_i},X_{B_i})\right)\\
&~~~+p\sum_{i=1}^{n}\mathbb{E}\left(X_i\left(q^{B_i}-q\right)D(W_i^*|X_{B_i})\right)+\frac{\delta p^3}{q}\mathbb{E}(q^{W_n})\\
&~~~\left.+\frac{p}{q}\sum_{i=1}^{n}|\mathbb{E}(X_i)\mathbb{E}(X_{A_i})-\mathbb{E}(X_iX_{A_i})+q\mathbb{E}(X_i)|\mathbb{E}\left(\left(q^{B_i}-q\right)D(W_i^*|X_{B_i})\right)\right\}.
\end{align*}
This proves the result.
\end{proof}

\begin{corollary}
Let $\text{B}_{\alpha,p}$ and $W_n$ be defined as in \eqref{11:in1} and \eqref{11:opa3}, respectively, and satisfy \eqref{11:opa2}. If $X_1,X_2,\ldots,X_n$ are independent random variables, then
\begin{align}
d_{sl}(W_n,\text{B}_{\alpha,p})\le \frac{2}{q^\alpha}\left\{\sqrt{\frac{2}{\pi}}\left(\frac{1}{4}+\sum_{i=1}^{n}\gamma_i-\gamma^*\right)^{-1/2}\sum_{i=1}^{n}|p-p_i|p_i^2+\delta p\prod_{i=1}^{n}(1-pp_i)\right\},\label{11:ww10}
\end{align}
where $\gamma_j=\min\{\frac{1}{2},1-\frac{1}{2}(q_j+|q_j-p_j|)\}$ and $\gamma^*=\max_{1\le j \le n}\gamma_j$.
\end{corollary}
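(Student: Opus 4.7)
I specialize the strategy of Theorem~\ref{11:opath2} to the independent case $A_i=B_i=\{i\}$; in this regime $X_{A_i}=X_{B_i}=X_i$, $W_i^{*}=W_i=W_n-X_i$, and $X_i$ is independent of $W_i$. The improvement over a direct specialization of \eqref{11:eq11} comes from replacing the $\Delta^{2}g$ reduction used in Theorem~\ref{11:opath2} by the sharper one-step total-variation control of $|\mathbb{E}\Delta g(W_i+1)|$ that independence permits.

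The first step is to rewrite \eqref{11:eq70} in this special case. Exploiting $X_i^{2}=X_i$, the identity $\sum_{j=1}^{X_{A_i}}\Delta g(W_i+j)=X_i\Delta g(W_i+1)$ for Bernoulli $X_i$, and the independence of $X_i$ and $W_i$, the three $X_{A_i}$-sums in \eqref{11:eq70} collapse and, after invoking \eqref{11:eq62}, the Stein identity reduces to
\begin{align*}
q\,\mathbb{E}[\mathscr{A}g(W_n)] \;=\; \sum_{i=1}^{n}p_i(p_i-p)\,\mathbb{E}[\Delta g(W_i+1)]\;-\;\delta p\,\mathbb{E}[g(W_n+1)].
\end{align*}

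Next I bound the two pieces. For the $\delta$ piece I apply \eqref{11:ww7}, so that $|\mathbb{E}g(W_n+1)|\le 2q^{1-\alpha}\mathbb{E}q^{W_n}$, and then use independence together with $\mathbb{E}q^{X_i}=q_i+qp_i=1-pp_i$ to evaluate $\mathbb{E}q^{W_n}=\prod_{i=1}^{n}(1-pp_i)$. For the leading sum I write
\begin{align*}
|\mathbb{E}\Delta g(W_i+1)|\;=\;|\mathbb{E}g(W_i+2)-\mathbb{E}g(W_i+1)|\;\le\;\|g\|_{\infty}\,D(W_i),
\end{align*}
and insert the uniform estimate $\|g\|_{\infty}\le 2q^{1-\alpha}$ from \eqref{11:eq35}. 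The one-step smoothness $D(W_i)=2d_{TV}(W_i,W_i+1)$ is then controlled by a Mattner--Roos / Barbour--Xia type inequality for sums of independent $\{0,1\}$-valued summands, giving $d_{TV}(W_i,W_i+1)\le\sqrt{2/\pi}\bigl(\tfrac{1}{4}+\sum_{j\ne i}\gamma_j\bigr)^{-1/2}$ with $\gamma_j$ defined as in the statement; making this uniform in $i$ via $\sum_{j\ne i}\gamma_j\ge\sum_{j=1}^{n}\gamma_j-\gamma^{*}$ produces the factor $\bigl(\tfrac{1}{4}+\sum_{j=1}^{n}\gamma_j-\gamma^{*}\bigr)^{-1/2}$ appearing in \eqref{11:ww10}.

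Finally, plugging the two bounds into the Stein equation \eqref{11:eq40}, dividing by $q$ and combining the factors of $q^{1-\alpha}$ yields \eqref{11:ww10}. The step I expect to be the most delicate is invoking the Mattner--Roos estimate in exactly the form suggested by the definition of the $\gamma_j$'s and $\gamma^{*}$, and matching the resulting smoothness constants for the Bernoulli summands $X_j$ against those quoted in the statement; once this is in place, the remaining consolidation of the coefficients $p_i(p_i-p)$ coming from the identity above with the factor coming from the total-variation bound is a careful but standard bookkeeping exercise.
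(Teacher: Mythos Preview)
Your reduction to
\[
q\,\mathbb{E}[\mathscr{A}g(W_n)]=\sum_{i=1}^{n}p_i(p_i-p)\,\mathbb{E}[\Delta g(W_i+1)]-\delta p\,\mathbb{E}[g(W_n+1)]
\]
is correct, and the $\delta$--piece is handled exactly as in the paper. The gap is in the main sum: bounding $|\mathbb{E}\Delta g(W_i+1)|$ directly by $\|g\|_\infty D(W_i)$ yields a term of order $\sum_i|p-p_i|\,p_i$, not the $\sum_i|p-p_i|\,p_i^2$ appearing in \eqref{11:ww10}. In other words, your final ``bookkeeping'' cannot produce the stated bound, because an extra factor of $p_i$ is missing.

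That factor comes from a step you have skipped, which is exactly the $\Delta^{2}g$ reduction you proposed to avoid. The two-moment choice \eqref{11:opa3} (equivalently \eqref{11:eq62}) forces $p=\sum_i p_i^2/\sum_i p_i$, hence $\sum_i p_i(p_i-p)=0$. This lets one subtract the common term $\mathbb{E}\Delta g(W_n+1)$ at no cost and write
\[
\sum_{i=1}^{n}p_i(p_i-p)\,\mathbb{E}[\Delta g(W_i+1)]
=-\sum_{i=1}^{n}(p-p_i)p_i\,\mathbb{E}\bigl[\Delta g(W_n+1)-\Delta g(W_i+1)\bigr]
=-\sum_{i=1}^{n}(p-p_i)p_i^{2}\,\mathbb{E}\bigl[\Delta^{2}g(W_i+1)\bigr],
\]
using $W_n=W_i+X_i$ and $X_i\in\{0,1\}$. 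Now one bounds $|\mathbb{E}\Delta^{2}g(W_i+1)|\le \|\Delta g\|_\infty\,D(W_i)\le 2q^{1-\alpha}\gamma$ and applies Mattner--Roos to $\gamma$, which delivers \eqref{11:ww10} with the correct $p_i^{2}$. So the $\Delta^{2}g$ step is not a detour to be replaced; it is precisely where the second-moment matching in \eqref{11:opa3} is cashed in.
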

\begin{proof}
Substituting $A_i=\{i\}$ in \eqref{11:eq70}, it can be easily verified that
\begin{align*}
q\mathbb{E}[\mathscr{A}g(W_n)]&=\sum_{i=1}^{n}(p_i-p)p_i\mathbb{E}(\Delta g(W_i+1))-\delta p \mathbb{E}(g(W_n+1)).
\end{align*}
Using \eqref{11:eq62}, we get
\begin{align*}
q\mathbb{E}[\mathscr{A}g(W_n)]&=-\sum_{i=1}^{n}(p-p_i)p_i\mathbb{E}(\Delta g(W_n+1)-\Delta g(W_i+1))-\delta p \mathbb{E}(g(W_n+1))\\
&=-\sum_{i=1}^{n}(p-p_i)p_i^2\mathbb{E}(\Delta^2 g(W_i+1))-\delta p \mathbb{E}(g(W_n+1)
\end{align*}
Note that $|\mathbb{E}(\Delta^2 g(W_i+1))|\le {2\gamma}/{q^{\alpha-1}}$, where $\gamma=2\max_{i\in J}d_{TV}(W_i,W_i+1)$ (see Barbour and Xia \cite{BX}, and Barbour and \v{C}ekanavi\v{c}ius \cite[p. 517]{BC})). Also, from Corollary 1.6 of Mattner and Roos \cite{MR} (see also Remark 4.1 of Vellaisamy {\em et al.} \cite{VUC}), we have 
\begin{align*}
\gamma\le \sqrt{\frac{2}{\pi}}\left(\frac{1}{4}+\sum_{j=1}^{n}\gamma_j-\gamma^*\right)^{-1/2},
\end{align*} 
where
\vspace{-0.18cm}
\begin{align*}
\gamma_j&=\min\left\{\frac{1}{2},1-d_{TV}(X_j,X_j+1)\right\}\\
&=\min\left\{\frac{1}{2},1-\frac{1}{2}(q_j+|q_j-p_j|)\right\}
\end{align*}
and $\gamma^*=\max_{1\le j \le n}\gamma_j$. Hence,
\begin{align*}
|\mathbb{E}[\mathscr{A}g(W_n)]|&\le \frac{2}{q^\alpha}\left\{\sqrt{\frac{2}{\pi}}\left(\frac{1}{4}+\sum_{i=1}^{n}\gamma_i-\gamma^*\right)^{-1/2}\sum_{i=1}^{n}|p-p_i|p_i^2+\delta p\prod_{i=1}^{n}(1-pp_i)\right\}.
\end{align*}
This proves the result.
\end{proof}

\begin{remark} \em
\begin{itemize}
\item[(i)] Note that $W_i^*$ can be expressed as the conditional sum of independent random variables. Therefore, Subsections 5.3 and 5.4 of R\"{o}llin \cite{RO2008}  and Remark 3.1(ii) of Kumar {\em et al.} \cite{KUV} are useful to find the upper bound of $D(W_i^* |\cdot)$.
\item[(ii)] Observe that $|\Delta^2 g(X+\ell)|\le 4q^{X+\ell-\alpha}$. Therefore, from \eqref{11:eq10}, we get 
\begin{align}
d_{sl}(W_n,\text{B}_{\alpha,p})&\le \frac{4}{p^2q^{\alpha}}\left\{\sum_{i=1}^{n}\mathbb{E}\left((X_i+p_i)\left(pX_{A_i}q^{W_i^*}+q^{W_n}-q^{W_i}\right)\right)\right.\nonumber\\
&~~~+p\sum_{i=1}^{n}\mathbb{E}\left(X_i\left(q^{W_n}-q^{W_i^*+1}\right)\right)+\frac{\delta p^3}{2}\mathbb{E}\left(q^{W_n}\right)\nonumber\\
&~~~\left.+\frac{p}{q}\sum_{i=1}^{n}|\mathbb{E}(X_i)\mathbb{E}(X_{A_i})-\mathbb{E}(X_iX_{A_i})+q\mathbb{E}(X_i)|\mathbb{E}\left(q^{W_n}-q^{W_i^*+1}\right)\right\}.\label{11:eq12}
\end{align}
Therefore,  in practice, one could  take the minimum of the bounds \eqref{11:eq11} and \eqref{11:eq12}.
\item[(iii)]  Theorems \ref{11:opath1} and \ref{11:opath2} are established using  Lemma \ref{11:smle1} for all $z\ge 0$.  Following the steps similar to the proofs of Theorems \ref{11:opath1} and \ref{11:opath2}, the results can also be derived using Lemma \ref{11:smle2} in terms of $z>1$. This can be used to approximate $\mathbb{E}[(W_n-z)^+]$ by $\mathbb{E}[(\text{B}_{\alpha,p}-z)^+]$ for $z>1$.
\item[(iv)] From Corollary 1 of Neammanee and Yonghint \cite{NY}, we have
\begin{align}
d_{sl}(\text{P}_\lambda,W_n)\le (2e^{\lambda}-1)\sum_{i=1}^{n}p_i^2,\label{11:ny1}
\end{align}
where $\lambda=\sum_{i=1}^{n}p_i$. The bound given in \eqref{11:eq50} and \eqref{11:ww10} are better than the above bound, for example, let $n=100$ and $p_i$, $1\le i\le 100$, be defined as follows:
 \begin{table}[H]
  \centering
  \caption{The values of $p_i$}
  \begin{tabular}{cccccccccccc}
\toprule
$i$ & $p_i$ & $i$ & $p_i$ & $i$ & $p_i$ & $i$ & $p_i$ & $i$ & $p_i$\\
\midrule
1-20 & 0.06 &21-40 & 0.07 & 41-60 & 0.08 & 61-80 & 0.09 &  81-100 & 0.10  \\
\bottomrule
\end{tabular}
\end{table}
Then, the following table gives a comparison between our bounds \eqref{11:eq50} and \eqref{11:ww10}, and the existing bound \eqref{11:ny1}.
\begin{table}[H]  
  \centering
  \caption{Comparison of bounds.}
  \begin{tabular}{cccc}
\toprule
$n$& From \eqref{11:ny1} (existing bound)  & From \eqref{11:eq50} & From \eqref{11:ww10} \\
\midrule
$10$ & $0.095193$ & $0$ & $7.6 \times 10^{-16}$\\ 
$20$ & $0.406097$ & $0$ & $6.8 \times 10^{-15}$\\
$30$ & $1.496990$ & $0.109842$ & $0.638717$\\
$40$ & $4.407670$ & $0.324195$ & $1.188300$\\
$50$ & $13.78920$ & $1.186000$ & $1.474570$\\
$60$ & $39.44710$ & $3.261280$ & $1.676520$\\
$70$ & $123.9500$ & $12.78810$ & $12.56050$\\
$80$ & $370.6940$ & $39.29820$ & $13.90400$\\
$90$ & $1227.670$ & $136.3000$ & $68.75740$\\
$100$&$3934.200$ & $425.1760$ & $335.1310$\\         
\bottomrule
\end{tabular}
\end{table}
For $1\le n\le 20$, note that the bounds given in \eqref{11:eq50} are zero, as expected. Further, our bounds improve upon  the existing bounds for various values of $p_i$. Also, for sufficiently large values of $n$, the bound given in \eqref{11:ww10} is better than the bound given in \eqref{11:eq50}. 
\end{itemize}
\end{remark}

\setstretch{1.11}
\bibliographystyle{PV}
\bibliography{PA2PSD}

\end{document}